\newtheorem{theorem}{Theorem}[section]
\newtheorem{lemma}[theorem]{Lemma}
\theoremstyle{definition}
\theoremstyle{remark}
\numberwithin{equation}{section}
\begin{document}
\title{HYPERGEOMETRIC FUNCTIONS OVER $\mathbb{F}_q$ AND TRACES OF FROBENIUS FOR ELLIPTIC CURVES}

\author{Rupam Barman}
\address{Department of Mathematical Sciences, Tezpur University, Napaam-784028, Sonitpur, Assam, India}
%    Current address
%\curraddr{Department of Mathematics and Statistics,
%Case Western Reserve University, Cleveland, Ohio 43403}
\email{rupamb@tezu.ernet.in}
%    \thanks will become a 1st page footnote.
\thanks{The first author thanks Mathematical Institute, University of Heidelberg and Mathematics Center Heidelberg (MATCH), 
where the majority of this research was conducted. He is grateful to John H. Coates, R. Sujatha, Otmar Venjakob, and Anupam Saikia 
for their encouragements. The second author is partially supported by INSPIRE Fellowship of Department 
of Science and Technology, Goverment of India. Finally, the authors thank Ken Ono and the referee for helpful comments.}

%    Information for second author
\author{Gautam Kalita}
\address{Department of Mathematical Sciences, Tezpur University, Napaam-784028, Sonitpur, Assam, India}
\email{gautamk@tezu.ernet.in}
%\thanks{Support information for the second author.}

%    General info
\subjclass[2000]{Primary 11T24, 11G20}

\date{August, 2011.}

%\dedicatory{This paper is dedicated to our advisors.}

\keywords{Gaussian hypergeometric series, elliptic curves, Frobenius endomorphisms}

\begin{abstract}
We present here explicit relations between the traces of Frobenius endomorphisms of certain families of elliptic curves and 
special values of ${_{2}}F_1$-hypergeometric functions over $\mathbb{F}_q$ for 
$q \equiv 1 ( \text{mod}~6)$ and $q \equiv 1 ( \text{mod}~4)$.
\end{abstract}

\maketitle

\section{Introduction and statement of results}
In this paper, we consider the problem of expressing traces of Frobenius endomorphisms of certain families of elliptic curves 
in terms of hypergeometric functions over finite fields. In \cite{Greene}, Greene introduced the notion of hypergeometric functions 
over finite fields or \emph{Gaussian hypergeometric series} which are analogous to the classical hypergeometric series. Since then, 
many interesting relations between special values of these functions and the number of $\mathbb{F}_p$-points on certain varieties 
have been obtained. For example, Koike \cite{koike} and Ono \cite{ono} gave formulas for the number of $\mathbb{F}_p$-points on 
elliptic curves in terms of special values of Gaussian hypergeometric series. Also, in \cite{BK, BK2} the authors studied this problem 
for certain families of algebraic curves.
\par Recently in \cite{Fuselier}, Fuselier gave formulas for the trace of Frobenius of certain families of elliptic curves which 
involved Gaussian hypergeometric series with characters of order 12 as parameters, under the assumption that $p\equiv 1 (\text{mod}~12)$. 
In \cite{Lennon}, Lennon provided a general formula expressing the number of $\mathbb{F}_q$-points of an elliptic 
curve $E$ with $j(E)\neq 0, 1728$ in terms of values of Gaussian hypergeometric series for $q=p^e \equiv 1 (\text{mod}~ 12)$. 
In \cite{Lennon2}, for $q \equiv 1 (\text{mod}~ 3)$, Lennon also gave formulas for certain elliptic curves involving Gaussian 
hypergeometric series with characters of order 3 as parameters.
\par We begin with some preliminary definitions needed to state our results. Let $q=p^e$ be a power of an odd prime and $\mathbb{F}_q$ 
the finite field of $q$ elements. Extend each character $\chi \in \widehat{\mathbb{F}_q^{\times}}$ to all of $\mathbb{F}_q$ by 
setting $\chi(0):=0$. If $A$ and $B$ are two characters of $\mathbb{F}_q^{\times}$, then ${A \choose B}$ is defined by
\begin{align}\label{eq0}
{A \choose B}:=\frac{B(-1)}{q}J(A,\overline{B})=\frac{B(-1)}{q}\sum_{x \in \mathbb{F}_q}A(x)\overline{B}(1-x),
\end{align}
where $J(A, B)$ denotes the usual Jacobi sum and $\overline{B}$ is the inverse of $B$.
\par
Recall the definition of the Gaussian hypergeometric series over $\mathbb{F}_q$ first defined by Greene in \cite{Greene}. 
For any positive integer $n$ and characters $A_0, A_1,\ldots, A_n$ and $B_1, B_2,\ldots, B_n \in \widehat{\mathbb{F}_q^{\times}}$, 
the Gaussian hypergeometric series ${_{n+1}}F_n$ is defined to be
\begin{align}\label{eq00}
{_{n+1}}F_n\left(\begin{array}{cccc}
                A_0, & A_1, & \cdots, & A_n\\
                 & B_1, & \cdots, & B_n
              \end{array}\mid x \right):=\frac{q}{q-1}\sum_{\chi}{A_0\chi \choose \chi}{A_1\chi \choose B_1\chi}
\cdots {A_n\chi \choose B_n\chi}\chi(x),
\end{align}
where the sum is over all characters $\chi$ of $\mathbb{F}_q^{\times}$.
\par Throughout the paper, we consider an elliptic curve $E_{a,b}$ over $\mathbb{F}_q$ in Weierstrass form as
\begin{align}\label{eq100}
E_{a,b}: y^2=x^3+ax+b.
\end{align}
If we denote by $a_q(E_{a,b})$ the trace of the Frobenius endomorphism on $E_{a,b}$, then
\begin{align}\label{eq101}
a_q(E_{a,b})=q+1-\#E_{a,b}(\mathbb{F}_q),
\end{align}
where $\#E_{a,b}(\mathbb{F}_q)$ denotes the number of $\mathbb{F}_q$-points on $E_{a,b}$ including the point at infinity. 
In the following theorems, we express $a_q(E_{a,b})$ in terms of Gaussian hypergeometric series.
\begin{theorem}\label{mt1} Let $q=p^e$, $p>0$ a prime and $q\equiv1~(mod~6)$. In addition, let $a$ be non-zero 
and $(-a/3)$ a quadratic residue modulo $q$. If $T \in \widehat{\mathbb{F}_q^{\times}}$ is a generator of the character group, 
then the trace of the Frobenius on $E_{a,b}$ can be expressed as
\begin{align}
a_q(E_{a,b})=-qT^{\frac{q-1}{2}}(-k)~{_{2}}F_1\left(\begin{array}{cccc}
                T^{\frac{q-1}{6}}, & T^{\frac{5(q-1)}{6}}\\
                 & \epsilon
              \end{array}\mid -\frac{k^3+ak+b}{4k^3} \right),\nonumber
\end{align}
where $\epsilon$ is the trivial character of $\mathbb{F}_q$ and $k\in \mathbb{F}_q$ satisfies $3k^2+a=0$.
\end{theorem}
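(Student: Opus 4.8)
The plan is to compute the number of points $\#E_{a,b}(\mathbb{F}_q)$ directly by counting solutions to $y^2 = x^3 + ax + b$, then convert the resulting character-sum expression into the stated ${_2}F_1$ value using the definitions~\eqref{eq0} and~\eqref{eq00}. The first step is to write the point count over the affine plane. Using the standard fact that the number of solutions $y$ to $y^2 = f(x)$ for fixed $x$ equals $1 + \phi(f(x))$, where $\phi = T^{(q-1)/2}$ is the quadratic character, I would obtain
\begin{align}
\#E_{a,b}(\mathbb{F}_q) = 1 + q + \sum_{x \in \mathbb{F}_q} \phi(x^3 + ax + b),\nonumber
\end{align}
so that by~\eqref{eq101} the trace becomes $a_q(E_{a,b}) = -\sum_{x} \phi(x^3 + ax + b)$. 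The whole problem now reduces to evaluating this cubic character sum.

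**Next I would** exploit the hypotheses to simplify the cubic. Since $(-a/3)$ is a quadratic residue modulo $q$, there exists $k \in \mathbb{F}_q$ with $3k^2 + a = 0$, i.e. $k^2 = -a/3$; this is exactly the $k$ appearing in the theorem, and it is a (nonzero, since $a \neq 0$) root of the derivative $3x^2 + a$. The natural move is the shift $x \mapsto x + k$, which translates the cubic so that its linear term is controlled by $f'(k) = 3k^2 + a = 0$. After substituting $x = X + k$ and expanding $f(X+k) = X^3 + 3kX^2 + (3k^2+a)X + (k^3 + ak + b)$, the linear term vanishes and I get
\begin{align}
a_q(E_{a,b}) = -\sum_{X \in \mathbb{F}_q} \phi\bigl(X^3 + 3kX^2 + (k^3 + ak + b)\bigr).\nonumber
\end{align}
The cubic now has no linear term, which is precisely the shape needed to match a ${_2}F_1$ of the displayed type.

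**The main work** is to turn this depressed character sum into a Jacobi-sum, and hence ${_2}F_1$, expression. I would use the standard Gauss/Jacobi-sum machinery: writing $\phi$ of a sum via the orthogonality-based expansion of characters, or equivalently using that sums of the form $\sum_x \phi(x^3 + cx^2 + d)$ unfold (after factoring out $\phi(-k^3)$ or similar normalizations) into binomial coefficients ${A\chi \choose B\chi}$ summed over $\chi$. The appearance of the sixth-root characters $T^{(q-1)/6}$ and $T^{5(q-1)/6}$ will come from the cubic's $X^3$ together with the quadratic character $\phi = T^{(q-1)/2}$: the product of the order-$3$ structure forced by $X^3$ and the order-$2$ quadratic character naturally produces characters of order $6$, and this is where the hypothesis $q \equiv 1 \pmod 6$ is essential, guaranteeing that $T^{(q-1)/6}$ actually exists as a character. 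The factor $\phi(-k) = T^{(q-1)/2}(-k)$ and the argument $-\tfrac{k^3+ak+b}{4k^3}$ should fall out of normalizing the sum by dividing through by the coefficient structure (the $3kX^2$ and constant terms rewritten in terms of $k^3$).

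**The hardest part** will be the algebraic bookkeeping in this final conversion: correctly tracking the characters and their powers so that the two numerator parameters come out as $T^{(q-1)/6}$ and $T^{5(q-1)/6}$ with denominator $\epsilon$, and verifying that all the normalizing factors assemble into exactly $-q\,T^{(q-1)/2}(-k)$ times the hypergeometric value with argument $-\tfrac{k^3+ak+b}{4k^3}$. I expect to need Greene's transformation identities (or a direct Jacobi-sum computation) to massage $\sum_X \phi(X^3 + 3kX^2 + c)$ into the ${_2}F_1$ normal form~\eqref{eq00}; matching the precise constant $-q$ and the sign and power inside $T^{\frac{q-1}{2}}(-k)$ is the delicate step, and I would check it by comparing a known special case (for instance against Fuselier's or Lennon's formulas in the cited references) once the general shape is established.
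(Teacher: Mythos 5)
Your reduction is exactly the paper's: the shift $x\mapsto x+k$ with $3k^2+a=0$ takes $E_{a,b}$ to $y^2=x^3+3kx^2+(k^3+ak+b)$, and the stated prefactor and argument then come out because $T^{\frac{q-1}{2}}(-9k)=T^{\frac{q-1}{2}}(-k)$ and $-27d/(4(3k)^3)=-(k^3+ak+b)/(4k^3)$. But in the paper this reduction is the \emph{entire} proof of Theorem \ref{mt1}, because all of the analytic content has been isolated in Theorem \ref{theorem1}, which evaluates the trace for any curve $y^2=x^3+cx^2+d$ with $c\neq 0$. Your proposal reduces the problem to the identity
\begin{align}
\sum_{x\in\mathbb{F}_q}T^{\frac{q-1}{2}}\bigl(x^3+cx^2+d\bigr)=q\,T^{\frac{q-1}{2}}(-3c)\;{_{2}}F_1\left(\begin{array}{cc}
T^{\frac{q-1}{6}}, & T^{\frac{5(q-1)}{6}}\\
 & \epsilon
\end{array}\mid -\frac{27d}{4c^3} \right),\nonumber
\end{align}
and then describes the remaining work only as ``standard Gauss/Jacobi-sum machinery'' to be sanity-checked against known special cases. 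That identity is where essentially all of the proof lives, and your sketch omits the one tool that actually produces the order-$6$ parameters: the Davenport--Hasse relation (Lemma \ref{lemma3}), applied once with $m=2$ to handle the Gauss sum coming from the $y^2$-sum and once with $m=3$ to split $G_{3l+\frac{q-1}{2}}$ into $G_{l+\frac{q-1}{6}}G_{l+\frac{q-1}{2}}G_{l+\frac{5(q-1)}{6}}$. Your heuristic that the order-$3$ structure of $X^3$ combined with the quadratic character ``naturally produces'' sextic characters is the right intuition, but without Davenport--Hasse (or an equivalent product formula) there is no mechanism converting the cubic character sum into the binomial coefficients ${T^{l+\frac{q-1}{6}}\choose T^l}{T^{l+\frac{5(q-1)}{6}}\choose T^l}$ that define the ${_2}F_1$.

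A second, smaller point: even granting the machinery, the conversion is not a routine unfolding. The paper must track a boundary correction at $l=0$, where $G_lG_{-l}=qT^l(-1)-(q-1)$ rather than $qT^l(-1)$, and the resulting extra term $-qT^{\frac{q-1}{2}}(d)$ cancels against a contribution from the $y$-sum; the clean constant $-q$ and the exact prefactor $T^{\frac{q-1}{2}}(-3c)$ depend on this cancellation. (Your starting point $a_q=-\sum_xT^{\frac{q-1}{2}}(x^3+ax+b)$ is a legitimate, slightly more elementary entry than the paper's additive-character identity \eqref{eq4}, but it converges to the same Gauss-sum computation.) So: correct strategy, and the reduction step is verbatim the paper's, but the theorem is not proved until you carry out the analogue of Theorem \ref{theorem1} in full.
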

\begin{theorem}\label{mt2} Let $q=p^e$, $p>0$ a prime, $q\neq 9$ and $q\equiv1~(mod~4)$. Also assume that $x^3+ax+b=0$ 
has a non-zero solution in $\mathbb{F}_q$ and $T\in \widehat{\mathbb{F}_q^{\times}}$ is a generator of the character group. 
The trace of the Frobenius on $E_{a,b}$ can be expressed as
\begin{align}
a_q(E_{a,b})=-qT^{\frac{q-1}{2}}(6h)T^{\frac{q-1}{4}}(-1)~{_{2}}F_1\left(\begin{array}{cccc}
                T^{\frac{q-1}{4}}, & T^{\frac{3(q-1)}{4}}\\
                 & \epsilon
              \end{array}\mid \frac{12h^2+4a}{9h^2} \right),\nonumber
\end{align}
where $\epsilon$ is the trivial character of $\mathbb{F}_q$ and $h\in\mathbb{F}^{\times}_q$ satisfies $h^3+ah+b=0$.
\end{theorem}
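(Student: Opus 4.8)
The plan is to turn the trace into a quadratic-character sum, exploit the rational root $h$ to normalize the cubic, and then match the resulting sum to a ${}_2F_1$ whose parameters are quartic characters. Write $\phi:=T^{(q-1)/2}$ for the quadratic character and $\chi_4:=T^{(q-1)/4}$ for the quartic character, which exists precisely because $q\equiv1\pmod{4}$; thus the upper parameters $T^{(q-1)/4},T^{3(q-1)/4}$ are $\chi_4,\overline{\chi_4}$ and $\chi_4^2=\phi$. Since for $f(x)\neq0$ the equation $y^2=f(x)$ has $1+\phi(f(x))$ solutions, \eqref{eq101} gives $a_q(E_{a,b})=-\sum_{x\in\mathbb{F}_q}\phi(x^3+ax+b)$.

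Next I would use the hypothesis $h^3+ah+b=0$ with $h\in\mathbb{F}_q^{\times}$. Substituting $x\mapsto x+h$ removes the constant term and exhibits the rational $2$-torsion point, giving $x^3+ax+b\mapsto X\bigl(X^2+3hX+c\bigr)$ with $c:=3h^2+a$, so that $a_q(E_{a,b})=-S$ where $S:=\sum_{X}\phi(X)\phi(X^2+3hX+c)$. For $X\neq0$ one has $\phi(X)\phi(X^2+3hX+c)=\phi(X+c/X+3h)$, and this summand is invariant under the involution $X\mapsto c/X$. Grouping the values of $X$ by $w:=X+c/X$ (each $w$ with $w^2\neq 4c$ arising from $1+\phi(w^2-4c)$ values of $X$, whose product $c$ is nonzero) collapses $S$, up to a bounded boundary correction and the vanishing sum $\sum_w\phi(w+3h)=0$, to $\sum_w\phi(w+3h)\phi(w^2-4c)$. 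The rescaling $w=3hW$ (using $\phi(9h^2)=1$) then produces the normalized sum $U(\lambda):=\sum_W\phi(W+1)\phi(W^2-\lambda)$ times an explicit character factor, where crucially $\lambda=4c/(9h^2)=(12h^2+4a)/(9h^2)$ is exactly the argument appearing in the statement.

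It remains to identify $U(\lambda)$ with $q$ times a character constant times ${}_2F_1(\chi_4,\overline{\chi_4};\epsilon\mid\lambda)$. The quartic characters are forced by the $W\mapsto-W$ symmetry of the even factor $\phi(W^2-\lambda)$: writing $U(\lambda)=\tfrac12\sum_W[\phi(1+W)+\phi(1-W)]\phi(W^2-\lambda)$ and expanding each $\phi$ through the quadratic Gauss sum $g(\phi)=\sum_t\phi(t)\psi(t)$, the inner sums become twisted quadratic (Salié-type) sums, whose evaluation introduces the square roots responsible for the order-$4$ characters. Equivalently, one matches $U(\lambda)$ against Greene's integral representation of ${}_2F_1$ from \cite{Greene}, which writes $q\,{}_2F_1(\chi_4,\overline{\chi_4};\epsilon\mid\lambda)$ in the form $\chi_4(-1)\sum_y\overline{\chi_4}(y)\,\chi_4(1-y)\,\overline{\chi_4}(1-\lambda y)$; reducing this triple quartic sum by the same degree-$2$ substitution recovers $U(\lambda)$. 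Tracking $\chi_4(-1)$ and $\phi(-1)$ through these manipulations, so that the net multiplicative-character prefactor assembles to $\phi(6h)\chi_4(-1)$, yields $a_q(E_{a,b})=-q\,\phi(6h)\,\chi_4(-1)\,{}_2F_1(\chi_4,\overline{\chi_4};\epsilon\mid\lambda)$, which is the claimed formula.

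The main obstacle is this last identification: correctly evaluating the twisted quadratic sums (or, equivalently, carrying out the substitution inside Greene's representation) while keeping exact control of every Gauss- and Jacobi-sum and every sign factor, since the final prefactor $\phi(6h)\chi_4(-1)$ is an accumulation of several such contributions. A secondary technical point is the bookkeeping of the bounded boundary terms in the involution step (the fixed points $X^2=c$, the excluded value $X=0$, and the case $w^2=4c$), which must be shown to cancel. The hypotheses $h\in\mathbb{F}_q^{\times}$ and $q\neq9$ are precisely what guarantee that the relevant quantities (in particular $6h$ and $c$) are nonzero and that the quartic-character relations used in the identification are nondegenerate.
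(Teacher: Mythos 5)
Your reduction takes a genuinely different route from the paper's. The paper shifts $x\mapsto x+h$ to reach $y^2=x^3+3hx^2+(3h^2+a)x$ and then invokes Theorem \ref{theorem2}, whose proof runs entirely through additive characters: the point count is written as $\sum_{x,y,z}\theta(zP(x,y))$, expanded via Gauss sums and the orthogonality relations, and closed up using $G_iG_{-i}=qT^i(-1)$ together with the Davenport--Hasse relation with $m=2$. You instead work purely with the quadratic character $\phi=T^{(q-1)/2}$: the identity $a_q(E_{a,b})=-\sum_x\phi(x^3+ax+b)$, the factorization through the rational $2$-torsion point, and the fibering of $X\mapsto w=X+c/X$ (each fibre having exactly $1+\phi(w^2-4c)$ points, with no residual boundary terms since $\phi(0)=0$ absorbs the degenerate cases $X=0$ and $w^2=4c$) correctly yield $a_q(E_{a,b})=-\phi(3h)\sum_W\phi(W+1)\phi(W^2-\lambda)$ with $\lambda=(12h^2+4a)/(9h^2)$. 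This half is complete, correct, and arguably cleaner than the paper's bookkeeping; I checked it numerically on $y^2=x^3-x$ over $\mathbb{F}_5$.

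There is, however, a genuine gap at exactly the point you label ``the main obstacle'': the identity $\sum_W\phi(W+1)\phi(W^2-\lambda)=q\,\phi(2)\,\chi_4(-1)\,{_{2}}F_1\bigl(\chi_4,\overline{\chi_4};\epsilon\mid\lambda\bigr)$, with $\chi_4=T^{(q-1)/4}$, is asserted in two alternative sketches but proved in neither. It is not a routine consequence of Greene's integral representation: that representation writes $q\,{_{2}}F_1$ as a triple product of \emph{quartic} characters, and passing between it and your purely quadratic sum is a quadratic transformation whose verification requires precisely the Gauss-sum and Davenport--Hasse machinery (or Greene's transformation theorems) that constitutes the whole of the paper's proof of Theorem \ref{theorem2}. ``Evaluating the Sali\'e-type sums while keeping control of every sign'' is not residual bookkeeping --- it is the theorem; until that identity, including the exact constant $\phi(2)\chi_4(-1)$, is established, the argument is incomplete. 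A secondary point: your closing claim that $q\neq 9$ guarantees $6h\neq 0$ is not right --- both your rescaling $w=3hW$ and the paper's appeal to Theorem \ref{theorem2} need $p\neq 3$, and $q=81$ satisfies $q\equiv 1\pmod 4$ and $q\neq 9$ while $3h=0$ there; this defect is shared with the paper's own hypothesis, but it should not be presented as resolved.
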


\section{Preliminaries}
Define the additive character $\theta: \mathbb{F}_q \rightarrow \mathbb{C}^{\times}$ by
\begin{align}
\theta(\alpha)=\zeta^{\text{tr}(\alpha)}
\end{align}
where $\zeta=e^{2\pi i/p}$ and $\text{tr}: \mathbb{F}_q \rightarrow \mathbb{F}_q$ is the trace map given by
$$\text{tr}(\alpha)=\alpha + \alpha^p + \alpha^{p^2}+ \cdots + \alpha^{p^{e-1}}.$$
For $A\in \widehat{\mathbb{F}_q^\times}$, the \emph{Gauss sum} is defined by
\begin{align}
G(A):=\sum_{x\in \mathbb{F}_q}A(x)\zeta^{\text{tr}(x)}=\sum_{x\in \mathbb{F}_q}A(x)\theta(x).
\end{align}
We let $T$ denote a fixed generator of $\widehat{\mathbb{F}_q^\times}$. We also denote by $G_m$ the Gauss sum $G(T^m)$. \par 
The \emph{orthogonality relations} for multiplicative characters are listed in the following lemma.
\begin{lemma}\emph{(\cite{ireland} Chapter 8).}\label{lemma2}
Let $\epsilon$ be the trivial character. Then
\begin{enumerate}
\item $\sum_{x\in\mathbb{F}_q}T^n(x)=\left\{
                                  \begin{array}{ll}
                                    q-1 & \hbox{if~ $T^n=\epsilon$;} \\
                                    0 & \hbox{if ~~$T^n\neq\epsilon$.}
                                  \end{array}
                                \right.$
\item $\sum_{n=0}^{q-2}T^n(x)~~=\left\{
                            \begin{array}{ll}
                              q-1 & \hbox{if~~ $x=1$;} \\
                              0 & \hbox{if ~~$x\neq1$.}
                            \end{array}
                          \right.$
\end{enumerate}
\end{lemma}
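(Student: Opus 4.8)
The plan is to establish both identities by the standard orthogonality argument for characters of the cyclic group $\mathbb{F}_q^\times$, which has order $q-1$; since $T$ generates the dual group $\widehat{\mathbb{F}_q^\times}$, it too has order $q-1$. Throughout I would keep the convention $\chi(0)=0$ in force, so that any term with $x=0$ contributes nothing and each sum over $x\in\mathbb{F}_q$ appearing in part (1) reduces to a sum over $x\in\mathbb{F}_q^\times$.

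For part (1), I would first treat the case $T^n=\epsilon$: then $T^n(x)=1$ for every $x\in\mathbb{F}_q^\times$, so the sum equals $q-1$. When $T^n\neq\epsilon$, there is some $y\in\mathbb{F}_q^\times$ with $T^n(y)\neq1$. Setting $S:=\sum_{x\in\mathbb{F}_q^\times}T^n(x)$ and using that $x\mapsto yx$ permutes $\mathbb{F}_q^\times$ together with the multiplicativity of $T^n$, I would compute
\[
T^n(y)\,S=\sum_{x\in\mathbb{F}_q^\times}T^n(yx)=\sum_{x'\in\mathbb{F}_q^\times}T^n(x')=S,
\]
so that $(T^n(y)-1)S=0$ and hence $S=0$, giving the claimed dichotomy.

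Part (2) I would prove by the dual argument, now shifting the exponent rather than the argument. If $x=1$ then $T^n(1)=1$ for all $n$ and the sum is $q-1$, while if $x=0$ the convention forces every term to vanish. For $x\in\mathbb{F}_q^\times$ with $x\neq1$, set $S:=\sum_{n=0}^{q-2}T^n(x)$; since $T^{q-1}=\epsilon$ the map $n\mapsto n+1$ merely permutes the index set $\{0,1,\dots,q-2\}$ modulo $q-1$, so
\[
T(x)\,S=\sum_{n=0}^{q-2}T^{n+1}(x)=\sum_{n=0}^{q-2}T^n(x)=S,
\]
whence $(T(x)-1)S=0$.

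The only point that genuinely uses the hypothesis that $T$ is a \emph{generator}, rather than merely a nontrivial character, is the claim $T(x)\neq1$ for $x\neq1$, which I would verify as follows: fixing a generator $g$ of $\mathbb{F}_q^\times$ and writing $x=g^k$, one has $T(x)=T(g)^k$ with $T(g)$ a primitive $(q-1)$-th root of unity, so $T(x)=1$ forces $k\equiv0\pmod{q-1}$, i.e.\ $x=1$. Granting this, $(T(x)-1)S=0$ yields $S=0$. I do not anticipate a substantial obstacle, since the argument is purely formal; the only places requiring care are the uniform bookkeeping at $x=0$ via $\chi(0)=0$ and this faithfulness of the generator $T$.
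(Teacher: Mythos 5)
Your proof is correct and is the standard orthogonality argument found in the cited reference (the paper itself states this lemma without proof, quoting Ireland--Rosen); the multiplication/shift trick in each part, the careful handling of the convention $\chi(0)=0$, and the observation that part (2) needs $T$ to be a faithful (generating) character are all exactly as expected. Nothing further is required.
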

Using orthogonality, we have the following lemma.
\begin{lemma}\emph{(\cite{Fuselier} Lemma 2.2).}\label{lemma1}
For all $\alpha \in \mathbb{F}_q^{\times}$, $$\theta(\alpha)=\frac{1}{q-1}\sum_{m=0}^{q-2}G_{-m}T^m(\alpha).$$
\end{lemma}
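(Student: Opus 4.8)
The plan is to verify the identity directly by substituting the definition of the Gauss sum $G_{-m}$ into the right-hand side and collapsing the resulting double sum with the orthogonality relations of Lemma \ref{lemma2}. This is essentially a discrete Fourier inversion for the additive character $\theta$.

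First I would write $G_{-m}=\sum_{x\in\mathbb{F}_q}T^{-m}(x)\theta(x)$ and insert this into $\frac{1}{q-1}\sum_{m=0}^{q-2}G_{-m}T^m(\alpha)$, then interchange the two finite summations (which is harmless since both ranges are finite). Because every multiplicative character is extended by the convention $\chi(0):=0$, the term $x=0$ contributes nothing, so the outer sum over $x$ runs effectively over $\mathbb{F}_q^{\times}$ only.

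Next, for $x\in\mathbb{F}_q^{\times}$ I would use multiplicativity together with $T^{-m}(x)=T^m(x^{-1})$ to combine the character values as $T^{-m}(x)\,T^m(\alpha)=T^m(\alpha x^{-1})$, which is legitimate precisely because $x^{-1}$ exists. The expression then becomes
\[
\frac{1}{q-1}\sum_{x\in\mathbb{F}_q^{\times}}\theta(x)\sum_{m=0}^{q-2}T^m(\alpha x^{-1}).
\]
By the second orthogonality relation, Lemma \ref{lemma2}(2), the inner sum over $m$ equals $q-1$ when $\alpha x^{-1}=1$ and vanishes otherwise, so only the single term $x=\alpha$ survives. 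This collapses the whole expression to $\frac{1}{q-1}\cdot\theta(\alpha)\cdot(q-1)=\theta(\alpha)$, which is the claimed formula.

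I do not expect any genuine obstacle here, since the argument is pure bookkeeping with orthogonality; the only points demanding care are the vanishing of the $x=0$ term and the step $T^{-m}(x)=T^m(x^{-1})$. Both are controlled by the convention $\chi(0)=0$ and by the standing hypothesis $\alpha\in\mathbb{F}_q^{\times}$, which is exactly what guarantees that $x^{-1}$ is defined on the surviving range and that the reduction $\alpha x^{-1}=1$ has the unique solution $x=\alpha$.
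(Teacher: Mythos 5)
Your proof is correct and is the standard argument: the paper itself does not prove this lemma but cites it from Fuselier, where it is established in exactly this way, by substituting the definition of $G_{-m}$, discarding the $x=0$ term via the convention $\chi(0)=0$, and collapsing the inner sum with the orthogonality relation of Lemma \ref{lemma2}(2). The two points you flag as needing care (the vanishing at $x=0$ and the use of $\alpha\in\mathbb{F}_q^{\times}$ to invert $x$) are indeed the only delicate steps, and you handle both correctly.
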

The following two lemmas on Gauss sum will be useful in the proof of our results.
\begin{lemma}\emph{(\cite{Greene} Eqn. 1.12).}\label{new}
If $i\in \mathbb{Z}$ and $T^i\neq \epsilon$, then $$G_iG_{-i}=qT^i(-1).$$
\end{lemma}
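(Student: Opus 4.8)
The plan is to establish the identity directly from the definition of the Gauss sum, since this is the classical evaluation of $G(\chi)G(\overline{\chi})$ for the nontrivial multiplicative character $\chi = T^i$. First I would write the product as a double sum over $\mathbb{F}_q^{\times}$ (the $x=0$ and $y=0$ terms drop out because of the convention $T^i(0)=0$),
$$G_iG_{-i} = \sum_{x,y \in \mathbb{F}_q^{\times}} T^i(x)\,T^{-i}(y)\,\theta(x)\,\theta(y),$$
and then use that $\theta$ is multiplicative in the sense $\theta(x)\theta(y)=\theta(x+y)$, together with $T^i(x)T^{-i}(y) = T^i(xy^{-1})$, to prepare the sum for a change of variables.

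Next I would substitute $x = ty$ with $t$ ranging over $\mathbb{F}_q^{\times}$, so that $xy^{-1}=t$ and $x+y = y(t+1)$. This separates the sum as
$$G_iG_{-i} = \sum_{t \in \mathbb{F}_q^{\times}} T^i(t) \sum_{y \in \mathbb{F}_q^{\times}} \theta\bigl(y(t+1)\bigr).$$
The inner sum is an additive character sum: when $t+1 \neq 0$, the values $y(t+1)$ run over all of $\mathbb{F}_q^{\times}$, so $\sum_{y} \theta(y(t+1)) = \sum_{z \in \mathbb{F}_q^{\times}} \theta(z) = -1$, using that $\sum_{z \in \mathbb{F}_q}\theta(z)=0$ since $\theta$ is a nontrivial additive character; when $t=-1$, every summand is $\theta(0)=1$, so the inner sum equals $q-1$.

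Splitting off the single term $t=-1$ would then yield
$$G_iG_{-i} = (q-1)\,T^i(-1) - \sum_{\substack{t \in \mathbb{F}_q^{\times} \\ t \neq -1}} T^i(t).$$
Finally, because $T^i \neq \epsilon$, the full sum $\sum_{t \in \mathbb{F}_q^{\times}} T^i(t)$ vanishes by Lemma~\ref{lemma2}(1), whence $\sum_{t \neq -1} T^i(t) = -T^i(-1)$. Substituting this collapses the expression to $q\,T^i(-1)$, as claimed. The only delicate points are keeping careful track of the degenerate contribution at $t=-1$ and invoking the hypothesis $T^i \neq \epsilon$ at precisely the right moment, since without nontriviality the final cancellation would fail; the additive orthogonality used for the inner sum is immediate from $\theta$ being a nontrivial character of $(\mathbb{F}_q,+)$.
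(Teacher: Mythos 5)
Your proof is correct: the change of variables $x=ty$, the evaluation of the inner additive character sum as $-1$ for $t\neq -1$ and $q-1$ for $t=-1$, and the final appeal to $\sum_{t}T^i(t)=0$ for $T^i\neq\epsilon$ all check out, and the signs combine to give $qT^i(-1)$ as claimed. The paper itself offers no proof --- it simply cites this as Equation 1.12 of Greene --- so your argument is the standard classical derivation that the citation stands in for, and it correctly isolates the one place where the hypothesis $T^i\neq\epsilon$ is needed.
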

\begin{lemma}\emph{(Davenport-Hasse Relation \cite{Lang}).}\label{lemma3}
Let $m$ be a positive integer and let $q=p^e$ be a prime power such that $q\equiv 1 (\text{mod}~m)$. 
For multiplicative characters $\chi, \psi \in \widehat{\mathbb{F}_q^\times}$, we have
\begin{align}
\prod_{\chi^m=1}G(\chi \psi)=-G(\psi^m)\psi(m^{-m})\prod_{\chi^m=1}G(\chi).
\end{align}
\end{lemma}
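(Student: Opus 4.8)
The plan is to prove the identity by expanding both products into character sums over $\mathbb{F}_q^\times$ and reducing matters to a self-contained Jacobi-sum identity, modeled on the $m=2$ (Gauss duplication) case. Since $q\equiv 1~(\mathrm{mod}~m)$, the character group $\widehat{\mathbb{F}_q^\times}$ contains a character $\eta$ of exact order $m$, and the condition $\chi^m=\epsilon$ singles out exactly $\{\eta^0,\eta^1,\ldots,\eta^{m-1}\}$; thus both products in the statement run over these $m$ characters. First I would isolate the degenerate factors: among the $\chi$ with $\chi^m=\epsilon$ the trivial character $\chi=\epsilon$ contributes $G(\epsilon)=\sum_{x\neq 0}\theta(x)=-1$ to $\prod_{\chi^m=\epsilon}G(\chi)$, which is precisely the factor reflected by the explicit $-1$ on the right-hand side, while for $\chi\neq\epsilon$ the relation $G_iG_{-i}=qT^i(-1)$ of Lemma \ref{new} fixes the absolute values. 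Computing $\lvert\mathrm{LHS}\rvert$ and $\lvert\mathrm{RHS}\rvert$ and checking they agree is a useful preliminary that pins down the modulus of the claimed constant before the phase is addressed.

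For the core equality I would expand the left-hand product directly from the definition of the Gauss sum, obtaining the $m$-fold sum $\sum_{t_0,\ldots,t_{m-1}}\psi(t_0\cdots t_{m-1})\,\eta(t_1t_2^2\cdots t_{m-1}^{m-1})\,\theta(t_0+\cdots+t_{m-1})$ over $t_j\in\mathbb{F}_q^\times$. The goal is to collapse this to a single Gauss sum governed by $\psi^m$. I would perform the scaling substitution $t_j=t_0r_j$ for $j\ge 1$ and sum out the scale variable $t_0$ using $\sum_{t_0}\Lambda(t_0)\theta(t_0 s)=\overline{\Lambda}(s)\,G(\Lambda)$ for $s\neq 0$; the diagonal locus $t_0=\cdots=t_{m-1}$, where $\sum_j t_j=m\,t_0$, is exactly the mechanism that produces the normalizing factor $\psi(m^{-m})$, since $\psi(m^{-m})G(\psi^m)=\sum_y\psi^m(y)\theta(my)$. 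After summing out $t_0$ the left-hand side becomes $G(\psi^m)$, up to a controlled twist by a power of $\eta$ whose triviality depends on the parity of $m$, times a residual $(m-1)$-fold character sum.

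The main obstacle is the evaluation of this residual sum: one must show it equals the remaining product $\prod_{j}G(\eta^j)$ demanded by the right-hand side, and this is where the genuine content lies. I would attack it by rewriting the residual sum as a product of Jacobi sums via the standard telescoping $G(A)G(B)=J(A,B)\,G(AB)$, so that the whole statement reduces to an identity among (multidimensional) Jacobi sums — the finite-field analogue of the Gauss multiplication formula for the Gamma function. The cleanest elementary derivation generalizes the \emph{completing the square} computation that yields the $m=2$ duplication formula $G(\psi)G(\psi\rho)=\overline{\psi}(4)\,G(\rho)\,G(\psi^2)$: here one instead completes an $m$-th power and tracks the resulting residue characters, which is again where the factor $\psi(m^{-m})$ surfaces, and an induction reducing the prime-power case to the prime case keeps the combinatorics in check. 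Finally I would reassemble the degenerate factors from the first step to recover the exact constant $-G(\psi^m)\psi(m^{-m})$. A fully self-contained alternative is to invoke the Gross--Koblitz formula and deduce the relation from the multiplication formula for the $p$-adic Gamma function, which renders $\psi(m^{-m})$ transparent at the cost of heavier machinery.
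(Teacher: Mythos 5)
The paper itself offers no proof of this lemma: it is the classical Davenport--Hasse product relation, quoted verbatim with a citation to Lang, so your attempt has to stand entirely on its own --- and it does not, because the step carrying all the content is gestured at rather than executed. Your expansion and scaling substitution are fine as far as they go: writing $\Lambda=\psi^m\eta^{m(m-1)/2}$ and summing out $t_0$ over the stratum $1+r_1+\cdots+r_{m-1}\neq 0$, the left side becomes $G(\Lambda)$ times the residual sum $\sum_{r_1,\dots,r_{m-1}\in\mathbb{F}_q^\times}\prod_{j=1}^{m-1}(\eta^j\psi)(r_j)\,\overline{\Lambda}(1+r_1+\cdots+r_{m-1})$, which is essentially the multidimensional Jacobi sum $J(\psi,\eta\psi,\dots,\eta^{m-1}\psi)$. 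But $\overline{\Lambda}$ contains $\overline{\psi}^m$, so this sum depends on $\psi$ throughout, and the assertion that it equals $\psi(m^{-m})$ times the $\psi$-independent quantity $\prod_{j=1}^{m-1}G(\eta^j)$ (after the parity twist is removed) \emph{is} the Davenport--Hasse relation, merely rewritten; nothing has been reduced. Your heuristic that the diagonal locus $t_0=\cdots=t_{m-1}$ ``produces'' the factor $\psi(m^{-m})$ only identifies the single term $r_1=\cdots=r_{m-1}=1$, where $\overline{\Lambda}(m)$ appears --- a single term of a character sum is not a factor of it.

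The two tools you propose for closing this gap would both fail or beg the question. ``Completing an $m$-th power'' has no content for $m\ge 3$: the $m=2$ duplication proof works because a one-variable quadratic can be linearized by a shift, and there is no substitution that converts $J(\psi,\eta\psi,\dots,\eta^{m-1}\psi)$ into a single $m$-th-power character sum; the known elementary proofs (e.g.\ in Berndt--Evans--Williams) instead count $m$-th power residues via $\#\{x:x^m=u\}=\sum_{\chi^m=\epsilon}\chi(u)$ and run a genuinely different Jacobi-sum analysis. The proposed ``induction reducing the prime-power case to the prime case'' would require the \emph{other} Davenport--Hasse theorem (the lifting relation for Gauss sums under the norm map $\mathbb{F}_{p^e}\to\mathbb{F}_p$), which you neither prove nor cite, and even granting it the prime case remains untouched. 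Several degenerate cases are also unhandled: when $\psi^m=\epsilon$ one factor on the left is $G(\epsilon)=-1$, so your preliminary modulus check $\lvert G\rvert=\sqrt{q}$ breaks down; when $\Lambda=\epsilon$ the omitted stratum $1+\sum_j r_j=0$ contributes $q-1$ per point and must be added back; and for even $m$ the quadratic twist $\eta^{m/2}$ distinguishing $G(\Lambda)$ from $G(\psi^m)$ has to be cancelled by a further Jacobi-sum argument you only allude to. The Gross--Koblitz route you mention at the end is indeed a valid known proof, but as you concede it is invoked, not carried out, so it cannot rescue the sketch.
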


\section{Proof of the results}
Theorem \ref{mt1} will follow as a consequence of the next theorem. We consider an elliptic curve $E_1$ over $\mathbb{F}_q$ 
in the form
\begin{align}\label{eq102}
E_1: y^2=x^3+cx^2+d,
\end{align}
where $c \neq 0$. The trace of the Frobenius endomorphism on $E_1$ is given by
\begin{align}\label{eq103}
a_q(E_1)=q+1-\#E_1(\mathbb{F}_q).
\end{align}
We express the trace of Frobenius on the curve $E_1$ as a special value of a hypergeometric function in the following way.
\begin{theorem}\label{theorem1}
Let $q=p^e$, $p>0$ a prime and $q\equiv1~(mod~6)$. If $T \in \widehat{\mathbb{F}_q^{\times}}$ is a generator of the character group, 
then the trace of the Frobenius on $E_1$ is given by
\begin{align}
a_q(E_1)=-qT^{\frac{q-1}{2}}(-3c)~{_{2}}F_1\left(\begin{array}{cccc}
                T^{\frac{q-1}{6}}, & T^{\frac{5(q-1)}{6}}\\
                 & \epsilon
              \end{array}\mid -\frac{27d}{4c^3} \right),\nonumber
\end{align}
where $\epsilon$ is the trivial character of $\mathbb{F}_q$.
\end{theorem}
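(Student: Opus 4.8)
The plan is to convert the point count into a single multiplicative character sum and then collapse everything to Greene's series via Davenport--Hasse. Writing $\phi:=T^{(q-1)/2}$ for the quadratic character and using that $y^{2}=w$ has $1+\phi(w)$ solutions, \eqref{eq103} gives
\[
a_q(E_1)=-\sum_{x\in\mathbb{F}_q}\phi(x^3+cx^2+d).
\]
To introduce Gauss sums I would count the affine points additively, writing $\#E_1^{\mathrm{aff}}=\frac1q\sum_{x,y}\sum_{t}\theta\big(t(y^2-x^3-cx^2-d)\big)$, peeling off the $t=0$ term (which contributes $q$) and evaluating the inner $y$-sum as $\sum_y\theta(ty^2)=\phi(t)G(\phi)$. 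Together with $G(\phi)^2=q\phi(-1)$ (Lemma \ref{new} with $i=(q-1)/2$) this yields
\[
\sum_{x\in\mathbb{F}_q}\phi(x^3+cx^2+d)=\frac{G(\phi)}{q}\sum_{t\neq0}\phi(t)\,\theta(-td)\sum_{x\in\mathbb{F}_q}\theta(-tx^3-tcx^2),
\]
and I would treat $x=0$ separately: there $\sum_{t\neq0}\phi(t)\theta(-td)=\phi(-d)G(\phi)$ collapses the contribution to the scalar $\phi(d)$.

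For the remaining $x\neq0$ part I would expand each of $\theta(-tx^3)$, $\theta(-tcx^2)$, $\theta(-td)$ by Lemma \ref{lemma1}, producing Gauss sums $G_{-l},G_{-m},G_{-n}$ along with the characters $T^{3l+2m}(x)$ and $T^{l+m+n}(t)$ (after extracting $T^m(c)T^n(d)$ and the sign factor $T^{l+m+n}(-1)$). The orthogonality relations of Lemma \ref{lemma2}, applied first in $x$ and then in $t$, force the two congruences $3l+2m\equiv0$ and $l+m+n\equiv\tfrac{q-1}{2}\pmod{q-1}$. Since $\gcd(2,q-1)=2$, the first has solutions only for even $l$, with two branches for $m$; solving the pair gives $l\equiv 2n$, $m\equiv\tfrac{q-1}{2}-3n$, and the two branches recombine so that $n$ runs over a full residue system. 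This reduces the sum to the single-index form
\[
\sum_{x\neq0}\phi(x^3+cx^2+d)=\frac{G(\phi)\phi(-1)\phi(c)}{q(q-1)}\sum_{n=0}^{q-2}G_{-2n}\,G_{3n-(q-1)/2}\,G_{-n}\,T^{-3n}(c)\,T^{n}(d).
\]

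Finally I would linearize the Gauss-sum product. Lemma \ref{lemma3} with $m=2$ (and $\psi=T^{-n}$) expresses $G_{-2n}$ through $G_{-n}G_{(q-1)/2-n}$, while $m=3$ with $\psi=T^{\,n-(q-1)/6}$ — legitimate because $q\equiv1\pmod6$ makes $\tfrac{q-1}{2}$ divisible by $3$, so $T^{3n-(q-1)/2}=\psi^3$ — rewrites $G_{3n-(q-1)/2}$ through $G_{n-(q-1)/6}G_{n+(q-1)/6}G_{n+(q-1)/2}$. Repeated use of Lemma \ref{new} (e.g.\ $G_{(q-1)/2-n}G_{n+(q-1)/2}=q\phi(-1)T^n(-1)$ and $G_{-n}^2=q^2/G_n^2$) collapses the product to $G_{n+(q-1)/6}G_{n-(q-1)/6}/G_n^2$ times explicit characters, which assemble into $T^n\!\big(-\tfrac{27d}{4c^3}\big)$ for the variable part and, via $T^{-(q-1)/6}(27)=\phi(3)$, into the constant $q\,\phi(-3c)$. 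Comparing with \eqref{eq00} and \eqref{eq0} — using $J(A,B)=G(A)G(B)/G(AB)$ and Lemma \ref{new}, Greene's series has the Gauss-sum shape $\frac{1}{(q-1)\phi(-1)}\sum_n \frac{G_{n+(q-1)/6}G_{n-(q-1)/6}}{G_n^2}\,T^n\!\big(-\tfrac{27d}{4c^3}\big)$ — identifies the sum as $q\,\phi(-3c)\,{_2}F_1$ and gives the theorem. The main obstacle is precisely this last step: the Davenport--Hasse bookkeeping must be carried out carefully at the degenerate characters, where both Lemma \ref{new} and the Gauss-sum form of $\binom{A}{B}$ break down, and it is the exceptional terms there that absorb the isolated scalar $\phi(d)$ and remove any spurious additive constant, leaving exactly the stated formula.
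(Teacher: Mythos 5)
Your proposal follows essentially the same route as the paper: expand the affine point count via additive characters, use orthogonality (Lemma \ref{lemma2}) to collapse the triple Gauss-sum expansion to a one-parameter family, linearize with Davenport--Hasse at $m=2$ and $m=3$, and reassemble into Greene's ${}_2F_1$ --- the differences (passing first through $-\sum_x\phi(x^3+cx^2+d)$, evaluating the $y$-sum as $\phi(t)G(\phi)$, and indexing the final sum by $n$ instead of $l$) are purely organizational. The one step you leave schematic, namely the bookkeeping at the degenerate characters where Lemma \ref{new} and the Gauss-sum form of ${A \choose B}$ fail, is precisely the computation the paper carries out explicitly (its exceptional $l=0$ term contributes $-qT^{\frac{q-1}{2}}(d)$, cancelling the $qT^{\frac{q-1}{2}}(d)$ arising from the $x=0$ contribution), and your prediction of how those terms must resolve is correct.
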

\begin{proof} The method of this proof follows similarly to that given in \cite{Fuselier}.
Let $$P(x,y)=x^3+cx^2+d-y^2$$ and denote by $\#E_1(\mathbb{F}_q)$ the number of points on the curve $E_1$ over $\mathbb{F}_q$ 
including the point at infinity. Then $$\#E_1(\mathbb{F}_q)-1=\#\{(x,y)\in \mathbb{F}_q\times \mathbb{F}_q : P(x,y)=0\}.$$
Using the elementary identity from \cite{ireland} \begin{align}\label{eq4}
\sum_{z\in \mathbb{F}_q}\theta(zP(x,y))=\left\{
                                            \begin{array}{ll}
                                              q & \hbox{if $P(x,y)=0$;} \\
                                              0 & \hbox{if $P(x,y)\neq 0,$}
                                            \end{array}
                                          \right.
\end{align}
we obtain
\begin{align}\label{eq1}
q\cdot(\#E_1(\mathbb{F}_q)-1)&=\sum_{x,y,z\in \mathbb{F}_q}\theta(zP(x,y))\nonumber\\
&=q^2+\sum_{z\in\mathbb{F}_q^\times}\theta(zd)+\sum_{y,z\in\mathbb{F}_q^\times}\theta(zd)\theta(-zy^2)+
\sum_{x,z\in\mathbb{F}_q^\times}\theta(zd)\theta(zx^3)\theta(zcx^2)\nonumber\\
&\hspace{.5cm}+\sum_{x,y,z\in \mathbb{F}_q^\times}\theta(zd)\theta(zx^3)\theta(zcx^2)\theta(-zy^2)\nonumber\\
&:=q^2+A+B+C+D.
\end{align}
Now using Lemma \ref{lemma1} and then applying Lemma \ref{lemma2} repeatedly for each term of \eqref{eq1}, 
we deduce that $$A=\frac{1}{q-1}\sum_{z\in \mathbb{F}_q^\times}\sum_{l=0}^{q-2}G_{-l}T^l(zd)=\frac{1}{q-1}\sum_{l=0}^{q-2}G_{-l}T^l(d)
\sum_{z\in \mathbb{F}_q^\times}T^l(z)=G_0=-1.$$ 
Similarly,
\begin{align}
B&=\frac{1}{(q-1)^2}\sum_{l,m=0}^{q-2}G_{-l}G_{-m}T^l(d)T^m(-1)\sum_{y\in \mathbb{F}_q^\times}T^{2m}(y)
\sum_{z\in \mathbb{F}_q^\times}T^{l+m}(z)\nonumber\\
&=1+G_{\frac{q-1}{2}}G_{-\frac{q-1}{2}}T^{\frac{q-1}{2}}(d)T^{\frac{q-1}{2}}(-1).\notag
\end{align}
Using Lemma \ref{new} for $i=\frac{q-1}{2}$, we deduce that
\begin{align}
B&=1+qT^{\frac{q-1}{2}}(-1)T^{\frac{q-1}{2}}(d)T^{\frac{q-1}{2}}(-1)\notag\\
&=1+qT^{\frac{q-1}{2}}(d).\notag
\end{align}
Expanding the next term, we have
\begin{align}
C&=\frac{1}{(q-1)^3}\sum_{l,m,n=0}^{q-2}G_{-l}G_{-m}G_{-n}T^l(d)T^n(c)\sum_{z\in \mathbb{F}_q^\times}T^{l+m+n}(z)
\sum_{x\in \mathbb{F}_q^\times}T^{3m+2n}(x).\nonumber
\end{align}
Finally,
\begin{align}
D&=\frac{1}{(q-1)^4}\sum_{l,m,n,k=0}^{q-2}G_{-l}G_{-m}G_{-n}G_{-k}T^l(d)T^n(c)T^k(-1)\times \nonumber\\
&\hspace{.5cm}\sum_{z\in \mathbb{F}_q^\times}T^{l+m+n+k}(z)\sum_{x\in \mathbb{F}_q^\times}T^{3m+2n}(x)
\sum_{z\in\mathbb{F}_q^\times}T^{2k}(z)\nonumber.
\end{align}
The innermost sum of $D$ is nonzero only when $k=0$ or $k=\frac{q-1}{2}$. Using the fact that $G_0=-1$, we obtain
\begin{align}
D=-C+D_{\frac{q-1}{2}},\nonumber
\end{align}
where
\begin{align}
D_{\frac{q-1}{2}}&=\frac{1}{(q-1)^3}\sum_{l,m,n=0}^{q-2}G_{-l}G_{-m}G_{-n}
G_{\frac{q-1}{2}}T^l(d)T^n(c)T^{\frac{q-1}{2}}(-1)\times \nonumber\\
&\hspace{.5cm}\sum_{z\in \mathbb{F}_q^\times}T^{l+m+n+\frac{q-1}{2}}(z)\sum_{x\in \mathbb{F}_q^\times}T^{3m+2n}(x),\nonumber
\end{align}
which is zero unless $m=-\frac{2}{3}n$ and $n=-3l-\frac{3(q-1)}{2}$. Since $G_{3l+\frac{3(q-1)}{2}}=G_{3l+\frac{q-1}{2}}$ and
 $G_{-2l-(q-1)}=G_{-2l}$, we have
\begin{align}
D_{\frac{q-1}{2}}=\frac{1}{q-1}\sum_{l=0}^{q-2}G_{-l}G_{-2l}G_{3l+\frac{q-1}{2}}
G_{\frac{(q-1)}{2}}T^l(d)T^{-3l+\frac{q-1}{2}}(c)T^{\frac{q-1}{2}}(-1).\nonumber
\end{align}
Using Davenport-Hasse relation \eqref{lemma3} for $m=2, \psi=T^{-l}$ and $m=3, \psi=T^{l+\frac{q-1}{6}}$ respectively, 
we deduce that $$G_{-2l}=\frac{G_{-l}G_{-l-\frac{q-1}{2}}}{G_{\frac{q-1}{2}}T^l(4)} ~~~~~~~ \text{and} ~~~~~~~ G_{3l+\frac{q-1}{2}}
=\frac{G_{l+\frac{q-1}{6}}G_{l+\frac{q-1}{2}}
G_{l+\frac{5(q-1)}{6}}}{qT^{-l-\frac{q-1}{6}}(27)}.$$
Therefore,
\begin{align}
D_{\frac{q-1}{2}}&=\frac{T^{\frac{q-1}{2}}(-3c)}{q(q-1)}\sum_{l=0}^{q-2}G_{-l}G_{-l}G_{-l-\frac{q-1}{2}}
G_{l+\frac{q-1}{6}}G_{l+\frac{q-1}{2}}G_{l+\frac{5(q-1)}{6}}T^l\left(\frac{27d}{4c^3}\right).\nonumber
\end{align}
Now, if $T^{m-n}\neq\epsilon$, then we have
\begin{align}\label{eq3}
G_mG_{-n}=q{T^m \choose T^n}G_{m-n}T^n(-1).
\end{align}
Replacing $l$ by $l-\frac{q-1}{2}$ and using \eqref{eq3}, we obtain
\begin{align}
D_{\frac{q-1}{2}}&=\frac{qT^{\frac{q-1}{2}}(-3c)}{q-1}
\sum_{l=0}^{q-2}G_{l}G_{-l}{T^{l-\frac{q-1}{3}} \choose T^{l-\frac{q-1}{2}}}
G_{\frac{q-1}{6}}{T^{l+\frac{q-1}{3}} \choose T^{l-\frac{q-1}{2}}}
G_{\frac{5(q-1)}{6}}T^{l-\frac{q-1}{2}}\left(\frac{27d}{4c^3}\right).\nonumber
\end{align}
Plugging the facts that if $l\neq0$ then $G_lG_{-l}=qT^l(-1)$ and if $l=0$ then $G_lG_{-l}=qT^l(-1)-(q-1)$ 
in appropriate identities for each $l$, we deduce that
\begin{align}
D_{\frac{q-1}{2}}&=\frac{q^3T^{\frac{q-1}{6}}(-1)T^{\frac{q-1}{2}}(-3c)}{q-1}
\sum_{l=0}^{q-2}{T^{l-\frac{q-1}{3}} \choose T^{l-\frac{q-1}{2}}}{T^{l+\frac{q-1}{3}} \choose T^{l-\frac{q-1}{2}}}T^{l-\frac{q-1}{2}}
\left(\frac{27d}{4c^3}\right)T^l(-1)\nonumber\\ 
&\hspace{.5cm}-q^2T^{\frac{q-1}{6}}(-1)T^{\frac{q-1}{2}}(-3c){T^{\frac{2(q-1)}{3}} \choose T^{\frac{q-1}{2}}}
{T^{\frac{q-1}{3}} \choose T^{\frac{q-1}{2}}}T^{\frac{q-1}{2}}\left(\frac{27d}{4c^3}\right).\nonumber
\end{align}
Replacing $l$ by $l+\frac{q-1}{2}$ in the first term and simplifying the second term, we obtain
\begin{align}
D_{\frac{q-1}{2}}&=\frac{q^3T^{\frac{q-1}{2}}(-3c)}{q-1}\sum_{l=0}^{q-2}
{T^{l+\frac{q-1}{6}} \choose T^l}{T^{l+\frac{5(q-1)}{6}} \choose T^l}T^l\left(-\frac{27d}{4c^3}\right)\nonumber\\ 
&\hspace{.5cm}-q^2T^{\frac{q-1}{2}}(d)\frac{{G_\frac{2(q-1)}{3}}G_{\frac{q-1}{2}}G_\frac{q-1}{3}
G_{\frac{q-1}{2}}}{q^2G_{\frac{q-1}{6}}G_{\frac{5(q-1)}{6}}}\nonumber\\
&=q^2T^{\frac{q-1}{2}}(-3c){_{2}}F_1\left(\begin{array}{cccc}
                T^{\frac{q-1}{6}}, & T^{\frac{5(q-1)}{6}}\\
                 & \epsilon
              \end{array}\mid -\frac{27d}{4c^3} \right) -qT^{\frac{q-1}{2}}(d).\nonumber
\end{align}
Putting the values of $A, B, C, D$ all together in \eqref{eq1} gives
\begin{align}
q\cdot (\#E_1(\mathbb{F}_q)-1)&=q^2+q^2T^{\frac{q-1}{2}}(-3c){_{2}}F_1\left(\begin{array}{cccc}
                T^{\frac{q-1}{6}}, & T^{\frac{5(q-1)}{6}}\\
                 & \epsilon
              \end{array}\mid -\frac{27d}{4c^3}\right).\nonumber
\end{align}
Since $a_q(E_1)=q+1-\#E_1(\mathbb{F}_q)$, we have completed the proof of the Theorem.
\end{proof}
\noindent
\textbf{Proof of Theorem \ref{mt1}.}
Since $a\neq 0$ and $(-a/3)$ is quadratic residue modulo $q$, we find $k \in \mathbb{F}_q^{\times}$ such that $3k^2+a=0$.
 A change of variables $(x, y) \mapsto (x+k, y)$ takes the elliptic curve $E_{a, b}: y^2=x^3+ax+b$ to
\begin{align}\label{curve1}
E'_{a, b}: y^2=x^3+3kx^2+(k^3+ak+b).
\end{align}
Clearly $a_{q}(E_{a,b})=a_q(E'_{a,b}).$
Since $3k\neq 0$, using Theorem \ref{theorem1} for the elliptic curve $E'_{a,b}$, we complete the proof.\hfill $\Box$

\par We now prove a result for $q \equiv 1 (\text{mod}~4)$ similar to Theorem \ref{theorem1} and Theorem \ref{mt2} 
will follow from this result.
\begin{theorem}\label{theorem2}
Let $q=p^e$, $p>0$ a prime and $q\equiv1~(mod~4)$. Let $E_2$ be an elliptic curve over $\mathbb{F}_q$ 
defined as $$E_2 : y^2=x^3+fx^2+gx$$ such that $f\neq 0$. If $T \in \widehat{\mathbb{F}_q^{\times}}$ is a generator of 
the character group, then the trace of the Frobenius on $E_2$ is given by
\begin{align}
a_q(E_2)=-qT^{\frac{q-1}{2}}(2f)T^{\frac{q-1}{4}}(-1){_{2}}F_1\left(\begin{array}{cccc}
                T^{\frac{q-1}{4}}, & T^{\frac{3(q-1)}{4}}\\
                 & \epsilon
              \end{array}\mid \frac{4g}{f^2} \right),\nonumber
\end{align}
where $\epsilon$ is the trivial character of $\mathbb{F}_q$.
\end{theorem}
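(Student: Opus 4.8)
The plan is to mimic the proof of Theorem \ref{theorem1} in structure, now with $P(x,y)=x^3+fx^2+gx-y^2$ and $\#E_2(\mathbb{F}_q)-1=\#\{(x,y):P(x,y)=0\}$. First I would apply \eqref{eq4} to write $q\,(\#E_2(\mathbb{F}_q)-1)=\sum_{x,y,z}\theta(zP(x,y))$, peel off the contribution $q^2$ from $z=0$, and split the remaining sum over $z\in\mathbb{F}_q^\times$ according to whether $x$ and $y$ vanish. This yields four pieces $A',B',C',D'$: from $x=y=0$, from $x=0,y\neq0$, from $x\neq0,y=0$, and from $x,y\neq0$. The structural novelty compared with $E_1$ is that $P$ has no constant term, so a short computation gives $A'=q-1$ and $B'=-(q-1)$, and these cancel outright.

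Next, expanding $\theta(zx^3)\theta(zfx^2)\theta(zgx)\theta(-zy^2)$ by Lemma \ref{lemma1} and collapsing the resulting sums with the orthogonality relations of Lemma \ref{lemma2}, the $y$-sum in $D'$ is supported only on the exponents $k=0$ and $k=\tfrac{q-1}{2}$ of the character attached to $y^2$. Exactly as in Theorem \ref{theorem1}, the $k=0$ part equals $-C'$ and cancels $C'$, leaving only $D'_{\frac{q-1}{2}}$. In this surviving term the two orthogonality constraints coming from the $z$-sum and the $x$-sum, namely $l+m+n+\tfrac{q-1}{2}\equiv0$ and $3l+2m+n\equiv0$ (where $l,m,n$ index the characters attached to $x^3,fx^2,gx$), pin down $m=\tfrac{q-1}{2}-2l$ and $n=l$. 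This reduces everything to a single sum over $l$ of $G_{-l}^2\,G_{2l+\frac{q-1}{2}}\,T^l(g/f^2)$ against explicit constant characters of $f$ and $-1$ and a leftover $G_{\frac{q-1}{2}}$.

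The technical heart is turning this into a ${}_2F_1$. I would apply the Davenport--Hasse relation (Lemma \ref{lemma3}) with $m=2$ and $\psi=T^{l+\frac{q-1}{4}}$ to factor $G_{2l+\frac{q-1}{2}}=G_{l+\frac{q-1}{4}}G_{l+\frac{3(q-1)}{4}}T^{l+\frac{q-1}{4}}(4)/G_{\frac{q-1}{2}}$; note that only $m=2$ is needed, not the additional $m=3$ step used for $E_1$, and that the $G_{\frac{q-1}{2}}$ cancels. The Davenport--Hasse factor contributes $T^l(4)$, combining with $T^l(g/f^2)$ to give $T^l(4g/f^2)$. I would then pair the four Gauss sums as $\left(G_{-l},G_{l+\frac{q-1}{4}}\right)$ and $\left(G_{-l},G_{l+\frac{3(q-1)}{4}}\right)$ and invoke \eqref{eq3}. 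Because the character differences $T^{\frac{q-1}{4}}$ and $T^{\frac{3(q-1)}{4}}$ are nontrivial for \emph{every} $l$, the relation applies uniformly, so — unlike the $E_1$ computation — there is no degenerate $l=0$ term requiring separate treatment; moreover $T^{2l}(-1)=1$ clears the residual sign. What remains is precisely $\tfrac{q-1}{q}$ times the defining sum \eqref{eq00} for the asserted ${}_2F_1$ with parameters $T^{\frac{q-1}{4}},T^{\frac{3(q-1)}{4}};\epsilon$ evaluated at $\tfrac{4g}{f^2}$.

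Finally I would assemble the constants: $G_{\frac{q-1}{4}}G_{\frac{3(q-1)}{4}}=G_{\frac{q-1}{4}}G_{-\frac{q-1}{4}}=qT^{\frac{q-1}{4}}(-1)$ by Lemma \ref{new}, together with $T^{\frac{q-1}{4}}(4)=T^{\frac{q-1}{2}}(2)$, collapses the prefactor to $T^{\frac{q-1}{2}}(-f)\,T^{\frac{q-1}{4}}(-4)=T^{\frac{q-1}{2}}(2f)\,T^{\frac{q-1}{4}}(-1)$, where the decisive simplification is $T^{\frac{q-1}{2}}(-1)=1$, valid exactly because $q\equiv1\pmod4$. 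Substituting $q\,(\#E_2(\mathbb{F}_q)-1)=q^2+D'_{\frac{q-1}{2}}$ and using $a_q(E_2)=q+1-\#E_2(\mathbb{F}_q)$ then gives the stated formula. The main obstacle I anticipate is purely bookkeeping: tracking the signs and the characters of $f,g,2,-1$ through the Davenport--Hasse and \eqref{eq3} steps so that they telescope correctly, and verifying that the hypothesis $q\equiv1\pmod4$ is precisely what annihilates the spurious $T^{\frac{q-1}{2}}(-1)$.
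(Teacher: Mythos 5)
Your proposal is correct and follows essentially the same route as the paper: the same decomposition of $\sum_{x,y,z}\theta(zP(x,y))$ with the $x=y=0$ and $x=0$ pieces cancelling, the same orthogonality constraints pinning $m=\frac{q-1}{2}-2l$, $n=l$, the single Davenport--Hasse step with $m=2$, $\psi=T^{l+\frac{q-1}{4}}$, and the two applications of \eqref{eq3} followed by Lemma \ref{new} to produce the ${}_2F_1$ and the prefactor $T^{\frac{q-1}{2}}(2f)T^{\frac{q-1}{4}}(-1)$. Your observation that the character differences $T^{\frac{q-1}{4}}$ and $T^{\frac{3(q-1)}{4}}$ are never trivial, so no degenerate $l=0$ term arises (unlike in Theorem \ref{theorem1}), is accurate and is implicitly used in the paper's computation.
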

\begin{proof}
We have $$\#E_2(\mathbb{F}_q)-1=\#\{(x,y)\in \mathbb{F}_q\times \mathbb{F}_q : P(x,y)=0\},$$
where $$P(x,y)=x^3+fx^2+gx-y^2.$$
Using \eqref{eq4}, we express the number of points as
\begin{align}\label{eq2}
q\cdot(\#E_2(\mathbb{F}_q)-1)&=\sum_{x,y,z\in \mathbb{F}_q}\theta(zP(x,y))\nonumber\\
&=q^2 +\sum_{z\in \mathbb{F}_q^\times}\theta(0) +\sum_{y,z\in \mathbb{F}_q^\times}\theta(-zy^2) +
\sum_{x,z\in \mathbb{F}_q^\times}\theta(zx^3)\theta(zfx^2)\theta(zgx)\nonumber\\ &\hspace{.5cm}+
\sum_{x,y,z\in \mathbb{F}_q^\times}\theta(zx^3)\theta(zfx^2)\theta(zgx)\theta(-zy^2)\nonumber\\
&:=q^2+(q-1)+A+B+C.
\end{align}
Now, following the same procedure as followed in the proof of Theorem \eqref{theorem1}, we deduce that
\begin{align}
A&=-(q-1)\nonumber\\
B&=\frac{1}{(q-1)^3}\sum_{l,m,n=0}^{q-2}G_{-l}G_{-m}G_{-n}T^m(f)T^n(g)\sum_{z\in \mathbb{F}_q^\times}T^{l+m+n}(z)\sum_{x\in
\mathbb{F}_q^\times}T^{3l+2m+n}(x)\nonumber\\
C&=-\frac{1}{(q-1)^3}\sum_{l,m,n=0}^{q-2}G_{-l}G_{-m}G_{-n}T^m(f)T^n(g)\sum_{z\in \mathbb{F}_q^\times}T^{l+m+n}(z)\sum_{x\in
\mathbb{F}_q^\times}T^{3l+2m+n}(x)\nonumber\\
&\hspace{.5cm}+\frac{1}{(q-1)^3}\sum_{l,m,n=0}^{q-2}G_{-l}G_{-m}G_{-n}G_{\frac{q-1}{2}}T^m(f)T^n(g)
\sum_{z\in \mathbb{F}_q^\times}T^{l+m+n+\frac{q-1}{2}}(z)\sum_{x\in \mathbb{F}_q^\times}T^{3l+2m+n}(x).\nonumber
\end{align}
Substituting the values of $A$, $B$, $C$ all together in \eqref{eq2} and simplifying after using Lemma \ref{lemma2}, we obtain
\begin{align}\label{eq100}
q\cdot(\#E_2(\mathbb{F}_q)-1)&=q^2+\frac{G_{\frac{q-1}{2}}T^{\frac{q-1}{2}}(f)}{q-1}\sum_{l=0}^{q-2}
G_{-l}G_{2l+\frac{q-1}{2}}G_{-l}T^l\left(\frac{g}{f^2}\right).
\end{align}
The Davenport-Hasse relation \eqref{lemma3} with $m=2, \psi=T^{l+\frac{q-1}{4}}$ yields
\begin{align}\label{new1}
G_{2l+\frac{q-1}{2}}=\frac{G_{l+\frac{q-1}{4}}G_{l+\frac{3(q-1)}{4}}}{G_{\frac{q-1}{2}}}T^{l-\frac{q-1}{4}}(4).
\end{align}
Using \eqref{new1} and then \eqref{eq3} in \eqref{eq100}, we have
\begin{align}
q\cdot(\#E_2(\mathbb{F}_q)-1)&=q^2+\frac{q^3T^{\frac{q-1}{2}}(2f)
T^{\frac{q-1}{4}}(-1)}{q-1}\sum_{l=0}^{q-2}{T^{l+\frac{q-1}{4}} \choose T^l}{T^{l+\frac{3(q-1)}{4}} 
\choose T^l}T^l\left(\frac{4g}{f^2}\right)\nonumber\\
&=q^2+q^2T^{\frac{q-1}{2}}(2f)T^{\frac{q-1}{4}}(-1){_{2}}F_1\left(\begin{array}{cccc}
                T^{\frac{q-1}{4}}, & T^{\frac{3(q-1)}{4}}\\
                 & \epsilon
              \end{array}\mid \frac{4g}{f^2} \right)\nonumber
\end{align}
and then using the relation $a_q(E_2)=q+1-\#E_2(\mathbb{F}_q)$, we complete the proof.
\end{proof}
\noindent
\textbf{Proof of Theorem \ref{mt2}.}
Since $x^3+ax+b=0$ has a non-zero solution in $\mathbb{F}_q$, let $h\in\mathbb{F}_q^{\times}$ be such that $h^3+ah+b=0$. 
A change of variables $(x, y) \mapsto (x+h, y)$ takes the elliptic curve $E_{a, b}: y^2=x^3+ax+b$ to
\begin{align}\label{curve2}
E''_{a, b}: y^2=x^3+3hx^2+(3h^2+a)x.
\end{align}
Since $a_{q}(E_{a,b})=a_q(E''_{a,b})$ and $3h\neq 0$, using Theorem \ref{theorem2} for the elliptic curve $E''_{a,b}$, 
we complete the proof.\hfill $\Box$

\bibliographystyle{amsplain}

\end{document}